\definecolor{darkblue}{rgb}{0,0,0.6}
\title{$K_1$-groups via binary complexes of fixed length}
\author[D.~Kasprowski]{Daniel Kasprowski}
\address{Rheinische Friedrich-Wilhelms-Universit\"at Bonn, Mathematisches Institut,\newline\indent Endenicher Allee 60, 53115 Bonn, Germany}
\email{kasprowski@uni-bonn.de}
\urladdr{http://www.math.uni-bonn.de/people/daniel/}
\author[B.~K\"ock]{Bernhard K\"ock}
\address{School of Mathematical Sciences, University of Southampton, Highfield, \newline \indent Southampton SO17 1BJ, United Kingdom}
\email{b.koeck@soton.ac.uk}
\urladdr{https://www.southampton.ac.uk/maths/about/staff/bk2.page}
\author[C.~Winges]{Christoph Winges}
\address{Rheinische Friedrich-Wilhelms-Universit\"at Bonn, Mathematisches Institut,\newline\indent Endenicher Allee 60, 53115 Bonn, Germany}
\email{winges@math.uni-bonn.de}
\urladdr{http://www.math.uni-bonn.de/people/winges/}
\keywords{}
\subjclass[2010]{Primary 19D06; Secondary 18E10, 19B99}
\newcounter{commentcounter}
\date{\today}
\newcommand{\bbC}{\mathbbm{C}}
\newcommand{\bbQ}{\mathbb{Q}}
\newcommand{\bbP}{\mathbb{P}}
\DeclareMathOperator{\id}{id}
\newcommand{\cN}{\mathcal{N}}
\newcommand{\bD}{\mathbb{D}}
\newcommand{\bJ}{\mathbb{J}}
\newcommand{\bP}{\mathbb{P}}
\newcommand{\bQ}{\mathbb{Q}}
\newcommand{\bT}{\mathbb{T}}
\newcommand{\comment}[1]{}
\newcommand{\bM}{\mathbb{M}}
\newcommand{\bN}{\mathbb{N}}
\renewcommand{\phi}{\varphi}
\numberwithin{equation}{section}
\newtheorem{theorem}[equation]{Theorem}
\newtheorem{prop}[equation]{Proposition}
\newtheorem{cor}[equation]{Corollary}
\newtheorem{lemma}[equation]{Lemma}
\theoremstyle{definition}
\newtheorem{defi}[equation]{Definition}
\newtheorem{definition}[equation]{Definition}
\newtheorem{rem}[equation]{Remark}
\newtheorem{remark}[equation]{Remark}
\newcommand{\lad}{i}
\DeclareMathOperator{\short}{\mathrm{sh}}
\newcommand{\tors}[2]{\langle #1,#2 \rangle}
\begin{document}

\begin{abstract}
 We modify Grayson's model of $K_1$ of an exact category to give a presentation whose generators are binary acyclic complexes of length at most~$k$ for any given $k \ge 2$.
 As a corollary, we obtain another, very short proof of the identification of Nenashev's and Grayson's presentations.
\end{abstract}

\maketitle

\section{Introduction}
Let $\cN$ be an exact category. Algebraic descriptions of Quillen's $K_1$-group of $\cN$ in terms of explicit generators and relations have been given by Nenashev \cite{Nenashev1998} and Grayson \cite{Grayson2012}. The generators in both descriptions are so-called binary acyclic complexes in $\cN$. While Nenashev uses complexes of length at most $2$, Grayson's generators are of arbitrary (finite) length. An algebraic proof of the fact that these two descriptions agree has been given in \cite{KW17}. In this paper, we will  give another presentation of $K_1(\cN)$; this time the generators are binary acyclic complexes of length at most~$k$ for any $k \ge 2$, see \cref{def:L1k} and \cref{thm:short-complexes}. A motivating question behind this new description is to determine the precise relations that in addition to Grayson's relations need to be divided out when restricting the generators in Grayson's description to complexes of length at most~$k$. If $k=2$, our relations are special cases of Nenashev's relations. In this sense, our presentation simplifies Nenashev's presentation. All this leads to a new, natural and sleek algebraic proof of the fact that Nenashev's and Grayson's descriptions agree, see \cref{sec:nena}.

The proof of our main result, see \cref{sec:proof}, basically proceeds by induction on $k$. The crucial ingredient in the inductive step is a shortening procedure for binary acyclic complexes, see \cref{def:shortening}, as discovered by Grayson and also used in~\cite{KW17}. The main new idea in this paper is the comparatively short and simple way of showing how this shortening procedure yields an inverse to passing from complexes of length $k$ to complexes of length $k+1$, see \cref{prop:shorten}.

\subsection*{Acknowledgements}
We thank the referee for their quick and careful report and for a question, which we answer in \cref{rem:NenashevRelation}.

C.~Winges acknowledges support by the Max Planck Society and Wolfgang L\"uck's ERC Advanced Grant ``KL2MG-interactions" (no.~662400). D.~Kasprowski and C.~Winges were funded by the Deutsche Forschungsgemeinschaft (DFG, German Research Foundation)
under Germany's Excellence Strategy - GZ 2047/1, Projekt-ID 390685813.

\section{Background and Statement of Main Theorem}

We recall a {\em binary acyclic complex} $\bbP=(P_*, d, d')$ in~$\cN$ is a graded object $P_*$ in $\cN$ supported on a finite subset of $[0,\infty]$ together with two degree $-1$ maps $d, d':P_* \rightarrow P_*$ such that both $(P, d)$ and $(P_*,d')$ are acyclic chain complexes in $\cN$. Here, {\em acyclic} means that each differential $d_n: P_n \rightarrow P_{n-1}$ admits a factorisation into an admissible epimorphism followed by an admissible monomorphism
\[P_n \twoheadrightarrow J_{n-1} \rightarrowtail P_{n-1}\]
such that $J_n \rightarrowtail P_n \twoheadrightarrow J_{n-1}$ is a short exact sequence in $\cN$ for every $n$. The differentials $d$ and $d'$ are called the {\em top} and {\em bottom} differential. We also write $\bbP^\top$ and $\bbP^\bot$ for the complexes $(P_*, d)$ and $(P_*, d')$. If $d=d'$,  we call $\bbP$ a {\em diagonal} binary acylic complex.

A {\em morphism between binary acyclic complexes} $\bbP$ and $\bbQ$ is a degree 0 map between the underlying graded objects which is a chain map with respect to both differentials. According to \cite[Section~3]{Grayson2012}, the obvious definition of short exact sequences turns the category of binary acyclic complexes into an exact category. We denote its Grothendieck group by $B_1(\cN)$.

\begin{definition}\label{def:GraysonK1}
The quotient of $B_1(\cN)$ obtained by declaring the classes of diagonal binary acyclic complexes to be zero is called {\em Grayson's $K_1$-group of $\cN$} and denoted by $K_1(\cN)$.
\end{definition}

Grayson proves in \cite[Corollary 7.2]{Grayson2012} that $K_1(\cN)$ is naturally isomorphic to Quillen's $K_1$-group of $\cN$. This justifies our notation. Note that Grayson uses complexes supported on $[-\infty, \infty]$. By \cite[Proposition~1.4]{HKT2017}, defining $K_1(\cN)$ with complexes supported on~$[0,\infty]$ as above yields the same $K_1$-group.

If we restrict the generators to be complexes supported on~$[0,k]$ (for $k \ge 0$), we write $B^k_1(\cN)$ and $K^k_1(\cN)$ for the resulting abelian groups.

\begin{definition}\label{def:L1k}\mbox{}
 \begin{enumerate}[(a)]
  \item\label{it:L1ka} A \emph{binary ladder} is a quadruple $(\bP,\bQ,\sigma,\tau)$ consisting of two binary complexes $\bP$ and $\bQ$ together with isomorphisms $\sigma \colon \bP^\top \xrightarrow{\sim} \bQ^\top$ and $\tau \colon \bP^\bot \xrightarrow{\sim} \bQ^\bot$.
\item\label{it:L1kc} Any two isomorphisms $\alpha, \beta \colon P \xrightarrow{\sim} Q$ in $\cN$ define a binary acyclic complex
 \[\begin{tikzcd}
  P\ar[r, shift left, "\alpha"]\ar[r, shift right, "\beta"'] & Q.
  \end{tikzcd}\]
 in $\cN$ supported on $[0,1]$. The corresponding element in any of the groups $B_1(\cN)$, $K_1(\cN)$ or $B_1^k(\cN)$, $K_1^k(\cN)$ for $k \ge 1$ is denoted by $\tors{\alpha}{\beta}$.
 \item\label{it:L1kd} Define $L^k_1(\cN)$ to be the quotient of $K_1^k(\cN)$ obtained by additionally imposing the relation
 \[ \bQ - \bP = \sum_{i=0}^k (-1)^i \tors{\sigma_i}{\tau_i} \]
 for every binary ladder $(\bP,\bQ,\sigma,\tau)$ in $\cN$ such that $\bP$ and $\bQ$ are supported on $[0,k]$, $P_i=Q_i$ and all $\sigma_i$ and $\tau_i$ are involutions, i.e.\ $\sigma_i^2=\id=\tau_i^2$.
 \end{enumerate}
\end{definition}

\begin{remark}
The object $L_1^k(\cN)$ has of course nothing to do with $L$-Theory; the~$L$ here is rather meant to refer to `ladder'.
It will follow from \cref{prop:shorten}, that imposing the ladder relation in \cref{def:L1k}\eqref{it:L1kd} not just for $\sigma_i,\tau_i$ involutions but for all automorphisms yields an isomorphic group.
\end{remark}

The following theorem is the precise formulation of our main result. Note that \cref{lem:factorisation} below implies that we have a natural map $L_1^k(\cN) \rightarrow K_1(\cN)$.

\begin{theorem}\label{thm:short-complexes}
 The canonical map
 \[ L_1^k(\cN) \to K_1(\cN) \]
 is an isomorphism for every $k \geq 2$.
\end{theorem}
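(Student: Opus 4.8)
The plan is to reduce the theorem to the single assertion that the shortening procedure inverts the passage from length-$k$ to length-$(k+1)$ complexes (the content of \cref{prop:shorten}), and then to finish by a formal colimit argument. I would first record the relevant structure maps. Viewing a binary acyclic complex supported on $[0,k]$ as one supported on $[0,k+1]$ induces maps $K_1^k(\cN) \to K_1^{k+1}(\cN)$. Since a binary ladder supported on $[0,k]$ is in particular one supported on $[0,k+1]$ --- padding by the zero object, for which $\tors{\id}{\id} = 0$ --- these descend to maps $\iota \colon L_1^k(\cN) \to L_1^{k+1}(\cN)$. The canonical maps of \cref{lem:factorisation} give $c_k \colon L_1^k(\cN) \to K_1(\cN)$ satisfying $c_{k+1} \circ \iota = c_k$. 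Finally, because every binary acyclic complex has finite support and every defining short exact sequence involves complexes of bounded length, the natural map $\colim_k K_1^k(\cN) \to K_1(\cN)$ is an isomorphism; I set $L_1^\infty(\cN) \colonequals \colim_k L_1^k(\cN)$ and let $p \colon L_1^\infty(\cN) \to K_1(\cN)$ be assembled from the $c_k$.

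The crucial and most involved step, which I expect to occupy the bulk of the work, is to prove that $\iota$ is an isomorphism for every $k \ge 2$, and this is exactly where the shortening procedure of \cref{def:shortening} enters. Shortening sends a complex of length $k+1$ to a class represented by complexes of length at most $k$ and should descend to a map $\short \colon L_1^{k+1}(\cN) \to L_1^k(\cN)$; the new idea is that $\short$ is a two-sided inverse of $\iota$. I expect the composite $\short \circ \iota$ to be the more direct identity, since shortening a complex that is already short returns it up to a ladder-type correction, whereas $\iota \circ \short = \id$ is the genuinely delicate direction: verifying it should consume the ladder relations of \cref{def:L1k} in an essential way. This is, I believe, precisely why those specific relations (rather than arbitrary ones) are imposed, and why one must require $k \ge 2$, so that the shortening has enough room to operate.

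Granting that $\iota$ is an isomorphism for all $k \ge 2$, the theorem follows formally. All transition maps of the system $\{L_1^k(\cN)\}_{k \ge 2}$ are then isomorphisms, so each structure map $L_1^k(\cN) \to L_1^\infty(\cN)$ is an isomorphism. On the other hand, the quotient maps $K_1^k(\cN) \to L_1^k(\cN)$ assemble to a surjection $q \colon K_1(\cN) \cong \colim_k K_1^k(\cN) \to L_1^\infty(\cN)$, and $p \circ q$ is the colimit of the canonical maps $K_1^k(\cN) \to K_1(\cN)$, hence the identity of $K_1(\cN)$. Thus $q$ is a surjection that is also injective, so an isomorphism, and $p = q^{-1}$ is an isomorphism as well. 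Since each $c_k$ factors as $L_1^k(\cN) \xrightarrow{\sim} L_1^\infty(\cN) \xrightarrow{\,p\,} K_1(\cN)$ with both maps isomorphisms, it is an isomorphism for every $k \ge 2$, which is the claim.
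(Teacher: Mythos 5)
Your proof takes essentially the same route as the paper: both reduce the theorem to \cref{lem:factorisation} and \cref{prop:shorten} and then conclude by a formal colimit argument (the paper interleaves the $K_1^k(\cN)$ and $L_1^k(\cN)$ into a single directed system and invokes cofinality, while you use the map of directed systems $K_1^\bullet(\cN) \to L_1^\bullet(\cN)$ together with the retraction $p \circ q = \id$; these are equivalent). One minor caveat about your description of the cited \cref{prop:shorten}: the inverse of $i_k$ there is induced by $\bP \mapsto -\short(\bP) - \tau_\bP$ rather than by $\short$ alone, and the delicate point in its proof is the well-definedness of this assignment on $L_1^{k+1}(\cN)$ (the identity $i_k \circ p_k = \id$ then follows by a short formal trick) --- but since you only cite that proposition, this does not affect the correctness of your argument for the theorem.
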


\cref{sec:proof} contains the proof of \cref{thm:short-complexes}.

\section{Proof of the main theorem}\label{sec:proof}
\begin{defi}
	\label{it:L1kb} For any object $P$ in $\cN$ let
	\[ \tau_P := \begin{pmatrix} 0 & \id \\ \id & 0 \end{pmatrix} \colon P \oplus P \xrightarrow{\sim} P \oplus P \]
	denote the automorphism of $P \oplus P$ which switches the two summands.
\end{defi}

Note that assigning $\tors{\id_{P\oplus P}}{\tau_P}$ with any object $P \in \cN$ defines a homomorphism $K_0(\cN) \rightarrow B_1^1(\cN)$. In particular, $\tors{\id_{P\oplus P}}{\tau_P} = \tors{\id_{Q\oplus Q}}{\tau_Q}$ if $P$ and $Q$ represent the same element in $K_0$.
As an aside, we remark that $\tors{\id_{P\oplus P}}{\tau_P}$ is equal to $\tors{\id_P}{-\id_P}$ in $B_1^1(\cN)$ and of order at most two in $L_1^1(\cN)$; both of these two facts are easy to prove but won't be used in this paper.

\begin{lemma}\label{lem:switching} Let $\bP$ be a binary acyclic complex in $\cN$ supported on $[0,k]$ and let $\mathrm{sw}(\bP)$ denote the binary complex obtained from $\bP$ by switching top and bottom differential. Then we have
\[\mathrm{sw}(\bP) = - \bP \qquad \textrm{ in } \qquad L_1^k(\cN).\]
\end{lemma}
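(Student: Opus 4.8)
The plan is to realise $\mathrm{sw}(\bP)$ and $-\bP$ as differing by a sum of terms $\tors{\id}{\tau_{P_i}}$ which is then forced to vanish by acyclicity. The key device is the degreewise switching automorphism $\tau_{P_i}$ from \cref{it:L1kb}: although there is no ladder from $\bP$ to $\mathrm{sw}(\bP)$ directly (the chain complexes $\bP^\top$ and $\mathrm{sw}(\bP)^\top=\bP^\bot$ need not be isomorphic), the switch becomes an honest chain isomorphism once one passes to the symmetric combination $\bP\oplus\mathrm{sw}(\bP)$, because there it interchanges the two differentials.

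Concretely, I would first form $\bP\oplus\mathrm{sw}(\bP)$, whose top differential is $\begin{pmatrix} d & 0 \\ 0 & d' \end{pmatrix}$ and whose bottom differential is $\begin{pmatrix} d' & 0 \\ 0 & d \end{pmatrix}$ in each degree. Let $\bD$ be the diagonal binary complex both of whose differentials equal $\begin{pmatrix} d & 0 \\ 0 & d' \end{pmatrix}$; this is the diagonalisation of the acyclic complex $\bP^\top\oplus\bP^\bot$, so $\bD=0$ in $K_1^k(\cN)$. I then claim $(\bP\oplus\mathrm{sw}(\bP),\bD,\id,\tau)$ is a binary ladder satisfying the hypotheses of \cref{def:L1k}\eqref{it:L1kd}, with bottom isomorphism given in degree $i$ by the switch $\tau_i:=\tau_{P_i}$. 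The top isomorphism is the identity since the top differentials agree; for the bottom, the matrix identity $\begin{pmatrix} d & 0 \\ 0 & d' \end{pmatrix}\begin{pmatrix} 0 & \id \\ \id & 0 \end{pmatrix}=\begin{pmatrix} 0 & \id \\ \id & 0 \end{pmatrix}\begin{pmatrix} d' & 0 \\ 0 & d \end{pmatrix}$ shows $\tau$ is a chain isomorphism $(\bP\oplus\mathrm{sw}(\bP))^\bot\xrightarrow{\sim}\bD^\bot$. Both $\id$ and each $\tau_i$ are involutions, and the two complexes share the same graded object, so the ladder relation applies and gives
\[ \bD-\bP-\mathrm{sw}(\bP)=\sum_{i=0}^k(-1)^i\tors{\id}{\tau_{P_i}} \qquad \text{in } L_1^k(\cN). \]

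To conclude I would kill both the left- and right-hand leftover terms. As noted, $\bD=0$. For the sum, the homomorphism $K_0(\cN)\to B_1^1(\cN)$, $[P]\mapsto\tors{\id_{P\oplus P}}{\tau_P}$ recorded above the lemma, identifies $\sum_{i=0}^k(-1)^i\tors{\id}{\tau_{P_i}}$ with the image of the Euler characteristic $\sum_{i=0}^k(-1)^i[P_i]\in K_0(\cN)$. Since $\bP^\top$ is acyclic, telescoping the short exact sequences $J_i\rightarrowtail P_i\twoheadrightarrow J_{i-1}$ with $J_{-1}=0=J_k$ shows this Euler characteristic vanishes, so the sum is zero in $B_1^1(\cN)$ and hence in $L_1^k(\cN)$. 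Rearranging yields $\mathrm{sw}(\bP)=-\bP$.

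I expect the only genuinely creative step to be the choice to work with $\bP\oplus\mathrm{sw}(\bP)$ rather than with $\bP$ alone, which is exactly what turns the purely formal switch into a chain isomorphism to a diagonal (hence trivial) complex. Everything after that is mechanical: the intertwining matrix identity, checking the involution conditions, and the standard acyclicity computation of the Euler characteristic. A minor point to double-check is that the $\tors{\id}{\tau_{P_i}}$ live in $B_1^1(\cN)$ and map compatibly into $L_1^k(\cN)$ for $k\geq 1$, so that applying the $K_0$-homomorphism after pushing forward is legitimate.
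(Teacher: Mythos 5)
Your proof is correct and follows essentially the same route as the paper: the paper's proof also considers the binary ladder $(\bP\oplus\mathrm{sw}(\bP),\bD,\id,\tau)$ with $\tau$ the degreewise switch onto the diagonal complex $\bD$ with $\bD^\top=\bD^\bot=(\bP\oplus\mathrm{sw}(\bP))^\top$, and kills the resulting sum $\sum_{i}(-1)^i\tors{\id}{\tau_{P_i}}$ via the vanishing Euler characteristic $\sum_i(-1)^i[P_i]=0$ in $K_0(\cN)$. You have merely spelled out the intertwining matrix identity and the telescoping argument that the paper leaves implicit.
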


\begin{proof} We have $\bP + \mathrm{sw}(\bP) = \bP \oplus \mathrm{sw}(\bP)  \textrm{ in }  B_1^k(\cN)$. The latter complex represents~$0$ in $L_1^k(\cN)$. To see this, consider the binary ladder $(\bP \oplus \mathrm{sw}(\bP), \bD, \sigma, \tau)$ where $\bD$ is the diagonal complex with $\bD^\top = \bD^\bot = (\bP \oplus \mathrm{sw}(\bP))^\top$, $\sigma = \id$ and $\tau$ switches the two summands $\bP$ and $\mathrm{sw}(\bP)$, and note that $\sum_{i=0}^k (-1)^i P_i = 0$ in $K_0(\cN)$.
\end{proof}

Regarding binary acyclic complexes supported on $[0,k]$ as complexes supported on $[0,k+1]$ defines a homomorphism
 \[ i_k \colon L^k_1(\cN) \to L^{k+1}_1(\cN). \]

The following lemma is basically a special case of the generalised Nenashev relation, see \cref{def:NenashevK1} below and \cite[Proposition~2.12]{HarPhD}. We include a short proof to convince the reader that complexes supported on $[0,k+1]$ suffice to prove the desired relation.
As usual, we write $\bP[1]$ for the complex shifted by $1$ (without changing the sign of the differentials $d$ and $d'$) so that $\bP[1]_0 =0$.

\begin{lemma}\label{lem:factorisation}
The homomorphism $i_k$ naturally factorises as
 \[ i_k \colon L^k_1(\cN) \to K^{k+1}_1(\cN) \twoheadrightarrow L^{k+1}_1 (\cN) \]
 where the second map is the canonical epimorphism.
\end{lemma}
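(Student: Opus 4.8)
The plan is to realise the factorisation by descending the evident inclusion $K_1^k(\cN)\to K_1^{k+1}(\cN)$, which regards a complex supported on $[0,k]$ as a complex supported on $[0,k+1]$, to the quotient $L_1^k(\cN)$. Since $L_1^k(\cN)$ is $K_1^k(\cN)$ modulo the ladder relations, this inclusion descends to a map $L_1^k(\cN)\to K_1^{k+1}(\cN)$ as soon as those relations hold in $K_1^{k+1}(\cN)$; composing with the canonical epimorphism $\pi\colon K_1^{k+1}(\cN)\twoheadrightarrow L_1^{k+1}(\cN)$ then recovers $i_k$, because $i_k$ is by definition induced by the same inclusion followed by $\pi$. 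Thus the whole lemma reduces to a single claim: for every binary ladder $(\bP,\bQ,\sigma,\tau)$ supported on $[0,k]$ with $P_i=Q_i$ and all $\sigma_i,\tau_i$ involutions, the relation
\[ \bQ - \bP = \sum_{i=0}^{k} (-1)^i \tors{\sigma_i}{\tau_i} \]
already holds in $K_1^{k+1}(\cN)$. Naturality in $\cN$ will be automatic, since the complex constructed below is functorial in $\cN$.

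To prove this relation without leaving length $k+1$, I would introduce a single auxiliary binary acyclic complex $\bM$ on $[0,k+1]$, namely the mapping cone of the ladder. Its underlying graded object is $M_n:=P_{n-1}\oplus Q_n$; its top differential is the mapping cone differential of $\sigma\colon\bP^\top\to\bQ^\top$, sending $(x,y)\in P_{n-1}\oplus Q_n$ to $(-d_{\bP}x,\ \sigma_{n-1}x+d_{\bQ}y)$, and its bottom differential is the analogous cone of $\tau\colon\bP^\bot\to\bQ^\bot$, built from $d'_{\bP},\tau,d'_{\bQ}$. Because $\sigma$ and $\tau$ are isomorphisms of complexes, both cones are acyclic, so $\bM$ really is a binary acyclic complex supported on $[0,k+1]$.

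The idea is then to compute the class $[\bM]\in K_1^{k+1}(\cN)$ in two different ways. On one hand, the summands $Q_n$ form a subcomplex isomorphic to $\bQ$ whose quotient is $\bP[1]$ (the induced differentials on the quotient carry a sign $(-1)^n$, which is absorbed by an isomorphism), so additivity together with the standard identity $\bP[1]=-\bP$ yields $[\bM]=\bQ-\bP$. On the other hand, I would filter $\bM$ by the subcomplexes $F^j$ obtained by keeping, in each degree $n$, the summand $Q_n$ when $n\le j$ and the summand $P_{n-1}$ when $n-1\le j$. Assigning the summand $Q_n\subseteq M_n$ the weight $n$ and the summand $P_{n-1}\subseteq M_n$ the weight $n-1$, one checks that both cone differentials are weight non-increasing, so each $F^j$ is a subcomplex, and $F^{j}/F^{j-1}$ is exactly the length-one binary complex $P_j\rightrightarrows Q_j$ with differentials $\sigma_j,\tau_j$, placed in degrees $[j,j+1]$. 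Additivity and the shift sign then give $[\bM]=\sum_{i=0}^{k}(-1)^i\tors{\sigma_i}{\tau_i}$, and comparing the two computations proves the relation.

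I expect the staircase filtration of the second computation to be the only delicate step: one must verify that the $F^j$ are admissible subcomplexes of binary complexes (they are degreewise direct summands, hence admissible, and their subquotients are acyclic), and keep careful track of the sign produced by the degree shift, which is precisely what turns the elementary pieces into the alternating sum. It is worth noting that involutivity of the $\sigma_i,\tau_i$ is never used in this argument, so it in fact establishes the relation for arbitrary isomorphisms, in agreement with the remark following \cref{def:L1k}.
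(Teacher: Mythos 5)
Your proof is correct and follows essentially the same route as the paper: your mapping cone $\bM$ is exactly the total complex $\bT$ of the ladder diagram used there, and your two computations (the subcomplex $\bQ$ with quotient $\bP[1]$, and the staircase filtration with subquotients $\tors{\sigma_i}{\tau_i}[i]$) are precisely the paper's ``horizontal and vertical'' filtrations. The only point to make explicit is that the identity $\bP[1]=-\bP$ must itself be established inside $K_1^{k+1}(\cN)$ rather than quoted as standard; the paper obtains it by specialising the very same relation to $\bP=\bQ$, $\sigma=\tau=\id$, and your construction yields this for free.
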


\begin{proof} Let $(\bP,\bQ,\sigma,\tau)$ be a binary ladder with $\bP, \bQ$ supported on $[0,k]$.
 Then all rows and columns of the diagram
 \[\begin{tikzcd}
  \ldots\ar[r, shift right]\ar[r, shift left] & P_2\ar[d, shift right, "\sigma_2"']\ar[d, shift left, "\tau_2"]\ar[r, shift left]\ar[r, shift right] & P_1\ar[d, shift right, "\sigma_1"']\ar[d, shift left, "\tau_1"]\ar[r, shift left]\ar[r, shift right]& P_0\ar[d, shift right, "\sigma_0"']\ar[d, shift left, "\tau_0"] \\
 \ldots\ar[r, shift right]\ar[r, shift left] & Q_2\ar[r, shift left]\ar[r, shift right] & Q_1\ar[r, shift left]\ar[r, shift right]& Q_0
 \end{tikzcd}\]
 are binary acyclic complexes, top differentials commute with top differentials and bottom differentials commute with bottom differentials.
 Filtering the associated total complex $\bT$ (which is a binary acyclic complex supported on $[0,k+1]$) ``horizontally and vertically'' then yields the relation
 \[ \bQ + \bP[1] = \bT = \sum_{i=0}^k  \tors{\sigma_i}{\tau_i}[i] \]
 in $B^{k+1}_1(\cN)$. If $\bP = \bQ$ and $\sigma=\tau = \id$, this shows that
 \begin{equation}\label{equ:shifting} \bP[1] = - \bP
 \end{equation}
 in $K_1^{k+1}(\cN)$. The two equalities above finally show that
  \[ \bQ -\bP = \sum_{i=0}^k  (-1)^i\tors{\sigma_i}{\tau_i}\]
 in $K^{k+1}_1(\cN)$, as desired.
\end{proof}

\begin{definition}\label{def:shortening}
 Let $k \ge 2$ and let $\bP = (P_*,d,d')$ be a binary acyclic complex supported on $[0,k+1]$,
 and choose factorisations
 \[ d_2 \colon P_2 \twoheadrightarrow J \rightarrowtail P_1 \quad \text{and} \quad
  d_2' \colon P_2 \twoheadrightarrow K \rightarrowtail P_1 \]
  witnessing that $(P_*,d)$ and $(P_*,d')$ are acyclic. In the following, we denote the maps $P_2\to J$ and $P_2\to K$ again by $d_2$ and $d_2'$.

 The {\em Grayson shortening of $\bP$} is the  binary acyclic complex $\short(\bP)$ supported on~$[0,k]$ whose top component is given by
 \[\begin{tikzcd}[row sep= tiny]
 \ldots \ar[r] & P_3\ar[r, "d_3"]& P_2\ar[r, "d_2"]\ar[d, phantom, "\oplus"] & J\ar[d, phantom, "\oplus"] \\
 & \oplus & K\ar[r, "\id"]\ar[d, phantom, "\oplus"] & K \\
 & J\ar[r, "\id"]\ar[d, phantom, "\oplus"] & J\ar[d, phantom, "\oplus"] & \oplus \\
 & K\ar[r, rightarrowtail] & P_1\ar[r, "d_1'"] & P_0 \\
 \end{tikzcd}\]
 and whose bottom component is
  \[\begin{tikzcd}[row sep= tiny]
 \ldots \ar[r] & P_3\ar[r, "d_3'"] & P_2\ar[r, "d_2'"]\ar[d, phantom, "\oplus"] & K\ar[d, phantom, "\oplus"] \\
 & \oplus & J\ar[r, "\id"]\ar[d, phantom, "\oplus"] & J \\
 & K\ar[r, "\id"]\ar[d, phantom, "\oplus"] & K\ar[d, phantom, "\oplus"] & \oplus \\
 & J\ar[r, rightarrowtail] & P_1\ar[r, "d_1"] & P_0 \\
 \end{tikzcd}\]
 Note that we have permuted the summands in the bottom component for better legibility, but that we consider the summation order in the top component to be the definitive one.
\end{definition}
\begin{rem}
	\label{rem:length1}
	Note that if $\bP$ is supported on $[0,1]$ then $\short(\bP)=\mathrm{sw}(\bP)$ and thus $\short(\bP)=-\bP\in L_1^1(\cN)$ in this case by \cref{lem:switching}.
\end{rem}
\begin{rem} \label{rem:GraysonShortening} The complex $\short(\bP)$ appears in handwritten notes by Grayson and has also been used in \cite[Section~5]{KW17}.
Note that our definition of $\short(\bP)$ includes a shift by $-1$ so $\short(\bP)$ is supported on $[0,k]$ rather than on $[1,k+1]$. This avoids bulky notations later.
\end{rem}

For $\bP$, $J$ and $K$ as in \cref{def:shortening}, we have $\tors{\id}{\tau_J} = \tors {\id}{\tau_K}$ because $J=K$ in $K_0(\cN)$. We denote the latter element by $\tau_\bP$. If in fact $J \cong K$, we replace the morphisms $P_2 \twoheadrightarrow K$ and $K \rightarrowtail P_1$ with $P_2 \twoheadrightarrow J$ and $J \rightarrowtail P_1$ by composing them with a fixed isomorphism between $J$ and $K$. Then the ordinary (non-naive) truncations
\[ \mathrm{t}_{\ge 1}(\bP) := (\begin{tikzcd}\ldots\ar[r, shift right]\ar[r, shift left] & P_3\ar[r, shift left]\ar[r, shift right] & P_2\ar[r, shift left]\ar[r, shift right]& J \end{tikzcd} )\]
and
\[ \mathrm{t}_{\le 2}(\bP) := (\begin{tikzcd}J\ar[r, shift left]\ar[r, shift right] & P_1\ar[r, shift left]\ar[r, shift right]& P_0\end{tikzcd} )\]
are binary acyclic complexes again. In this case, the following crucial lemma computes $\short(\bP) \in L_1^k(\cN)$ in terms of these truncations and $\tau_\bP$.

\begin{lemma}\label{lem:truncation} Let $\bP$ be a binary acyclic complex supported on $[0,k+1]$ and suppose that $J \cong K$. Then we have
\[\short(\bP) = \mathrm{t}_{\ge 1}(\bP)[-1] - \mathrm{t}_{\le 2}(\bP) - \tau_\bP \qquad \textrm{in} \qquad L_1^k(\cN).\]
In particular:
\begin{enumerate}[(a)]
 \item\label{it:truncationa} If $\bP$ is a diagonal complex, then $\short(\bP) = - \tau_\bP$ in $L_1^k(\cN)$.
 \item\label{it:truncationb} If $P_0 = 0$, then $\short(\bP) = \bP[-1] - \tau_\bP$ in $L_1^k(\cN)$.
 \end{enumerate}
\end{lemma}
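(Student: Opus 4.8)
The plan is to exhibit $\short(\bP)$, up to a single binary-ladder correction, as an honest direct sum of the two truncations together with two contractible complexes, and then to read the asserted identity off from the ladder relation in \cref{def:L1k}\eqref{it:L1kd}. First I would isolate the building blocks of $\short(\bP)$. Inspecting the two diagrams in \cref{def:shortening} and using the fixed identification $J \cong K$, the underlying graded object of $\short(\bP)$ is the degreewise direct sum of the graded objects of $\mathrm{t}_{\ge 1}(\bP)[-1]$, of $\mathrm{t}_{\le 2}(\bP)$, and of the two diagonal complexes $(J \xrightarrow{\id} J)$ and $(K \xrightarrow{\id} K)$ sitting in the appropriate adjacent degrees. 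In the top degrees the two differentials of $\short(\bP)$ are literally $d$ and $d'$, so there $\short(\bP)$ agrees with $\mathrm{t}_{\ge 1}(\bP)[-1]$. In the bottom two degrees, however, the top differential of $\short(\bP)$ uses the bottom data $K \rightarrowtail P_1$ and $d_1'$, while its bottom differential uses the top data $J \rightarrowtail P_1$ and $d_1$; the roles of the two differentials are interchanged, so this part contributes the switched truncation $\mathrm{sw}(\mathrm{t}_{\le 2}(\bP))$ rather than $\mathrm{t}_{\le 2}(\bP)$ itself.

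Accordingly I would set
\[ \bD := \mathrm{t}_{\ge 1}(\bP)[-1] \oplus \mathrm{sw}(\mathrm{t}_{\le 2}(\bP)) \oplus (J \xrightarrow{\id} J) \oplus (K \xrightarrow{\id} K), \]
a binary acyclic complex supported on $[0,k]$ whose graded object agrees with that of $\short(\bP)$. Ordering the summands of $\bD$ in each degree to match the ``definitive'' top order of \cref{def:shortening}, one arranges that the top differentials of $\bD$ and $\short(\bP)$ coincide, so that $\sigma := \id$ is an isomorphism $\bD^\top \xrightarrow{\sim} \short(\bP)^\top$. The only remaining discrepancy is between the two bottom differentials, and by construction it is precisely the permutation of the two copies of the waist object $J \cong K$ recorded by the reordering of summands in the bottom diagram. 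This permutation is a degreewise involution $\tau$ intertwining $\bD^\bot$ and $\short(\bP)^\bot$ which is the identity in all but a single, odd, degree, where it is the transposition $\tau_J$. Thus $(\bD, \short(\bP), \id, \tau)$ is a binary ladder of the type allowed in \cref{def:L1k}\eqref{it:L1kd}, and the ladder relation gives
\[ \short(\bP) - \bD = \sum_{i=0}^{k} (-1)^i \tors{\id}{\tau_i} = -\tors{\id}{\tau_J} = -\tau_\bP \]
in $L_1^k(\cN)$. Since the two diagonal summands $(J\xrightarrow{\id}J)$ and $(K\xrightarrow{\id}K)$ vanish in $K_1^k(\cN)$, and $\mathrm{sw}(\mathrm{t}_{\le 2}(\bP)) = -\mathrm{t}_{\le 2}(\bP)$ by \cref{lem:switching}, we obtain $\bD = \mathrm{t}_{\ge 1}(\bP)[-1] - \mathrm{t}_{\le 2}(\bP)$ in $L_1^k(\cN)$, and the main formula follows.

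The hard part will be the bottom-differential bookkeeping: one must verify that the single transposition $\tau$ really is a chain isomorphism $\bD^\bot \xrightarrow{\sim} \short(\bP)^\bot$ --- equivalently, that conjugating by $\tau$ untangles the cross-wired identity maps of $\short(\bP)^\bot$ into the block-diagonal differential of $\bD^\bot$ --- and that it is supported in exactly one odd degree, so that the signed sum collapses to $-\tau_\bP$ rather than to $0$. Once the general identity is in place, the two special cases are immediate specialisations. If $\bP$ is diagonal then $d = d'$, both truncations are diagonal and hence vanish in $K_1^k(\cN)$, leaving $\short(\bP) = -\tau_\bP$ and proving \eqref{it:truncationa}. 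If $P_0 = 0$ then acyclicity forces the monomorphism $J \rightarrowtail P_1$ to be an isomorphism; one then checks that $\mathrm{t}_{\ge 1}(\bP)[-1] - \mathrm{t}_{\le 2}(\bP) = \bP[-1]$ in $L_1^k(\cN)$, the torsion introduced by the identification $J \cong K$ cancelling between the two truncations, which yields \eqref{it:truncationb}.
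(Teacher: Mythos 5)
Your overall strategy is exactly the paper's: you form the same comparison complex $\bD = \mathrm{t}_{\ge 1}(\bP)[-1] \oplus \mathrm{sw}(\mathrm{t}_{\le 2}(\bP)) \oplus \bJ \oplus \bJ[1]$, match the top differentials by $\sigma=\id$, and absorb the mismatch of the bottom differentials into a ladder relation. However, the step you yourself flag as ``the hard part'' is where your argument goes wrong. The correcting involution $\tau$ is \emph{not} supported in a single odd degree: the top differential of $\short(\bP)$ links the two contractible $J$-strands as (degree $1\to 0$) and (degree $2\to 1$), while the bottom differential links them as (degree $1\to 0$) and (degree $2\to 1$) with the two copies of $J$ interchanged in \emph{each} of the degrees $0$, $1$ and $2$ (this is precisely the reordering of summands between the two diagrams in \cref{def:shortening}, which occurs in all three of these degrees). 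Hence
\[ \sum_{i=0}^{k} (-1)^i \tors{\id}{\tau_i} = \tau_\bP - \tau_\bP + \tau_\bP = +\tau_\bP, \]
not $-\tau_\bP$. With your orientation of the ladder this yields $\short(\bP) = \mathrm{t}_{\ge 1}(\bP)[-1] - \mathrm{t}_{\le 2}(\bP) + \tau_\bP$, i.e.\ the asserted identity with the wrong sign on $\tau_\bP$.

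The conclusion you want can still be salvaged, because $2\tau_\bP = 0$ in $L_1^k(\cN)$ (for instance, reversing the roles of the two complexes in any involutive binary ladder imposes the negative of the same relation, so $2\sum_i(-1)^i\tors{\sigma_i}{\tau_i}=0$; this is also the ``order at most two'' fact recorded after \cref{it:L1kb}). But the paper deliberately avoids relying on this, and you neither state nor prove it, so as written your computation of the signed sum is simply incorrect rather than a different route to the same formula. To repair the proof, either carry out the bookkeeping honestly (transpositions in degrees $0$, $1$, $2$, alternating sum $+\tau_\bP$, and then quote the ladder relation with the orientation $(\short(\bP),\bD,\id,\tau)$ as in the paper to land on $\short(\bP)=\bD-\tau_\bP$), or explicitly establish and invoke $2\tau_\bP=0$ in $L_1^k(\cN)$. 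The two special cases \eqref{it:truncationa} and \eqref{it:truncationb} are handled correctly once the main identity is in place.
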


\begin{proof}
Without permuting any summands in the bottom component, $\short(\bP)$ has top component
 \[\begin{tikzcd}[row sep= tiny]
 \ldots \ar[r] & P_3\ar[r, "d_3"] & P_2\ar[r, "d_2"]\ar[d, phantom, "\oplus"] & J\ar[d, phantom, "\oplus"] \\
 & \oplus & J\ar[r, "\id"]\ar[d, phantom, "\oplus"] & J\\
 & J\ar[r, "\id"]\ar[d, phantom, "\oplus"] & J\ar[d, phantom, "\oplus"] & \oplus \\
 & J\ar[r, rightarrowtail] & P_1\ar[r, "d'_1"] & P_0 \\
 \end{tikzcd}\]
 and bottom component
 \[\begin{tikzcd}[row sep= tiny]
 \ldots \ar[r] & P_3\ar[r, "d'_3"]& P_2\ar[dr, near start, "d'_2"]\ar[d, phantom, "\oplus"] & J\ar[d, phantom, "\oplus"] \\
 & \oplus & J\ar[d, phantom, "\oplus"] & J \\
 & J\ar[dr, rightarrowtail]\ar[d, phantom, "\oplus"] & J\ar[d, phantom, "\oplus"] \ar[uur, near start, "\id"'] & \oplus \\
 & J\ar[uur, "\id"] & P_1\ar[r, "d_1"] & P_0 \\
 \end{tikzcd}\]
  Now consider the binary ladder $(\short(\bP), \mathrm{t}_{\ge 1}(\bP)[-1] \oplus \bJ \oplus \bJ[1] \oplus \mathrm{sw}(\mathrm{t}_{\le 2}(\bP)), \sigma, \tau)$, where $\bJ$ is the diagonal binary complex supported on $[0,1]$ given by $\bJ^\top = \bJ^\bot = (J \overset{\id}{\longrightarrow} J)$, $\sigma = \id$  and $\tau$ is the automorphism switching the two copies of $J$ in degrees $0$, $1$ and~$2$ and the identity in all higher degrees. From this binary ladder we obtain the following equality in $L_1^k(\cN)$:
 \[  \mathrm{t}_{\ge 1}(\bP)[-1] +  \mathrm{sw}(\mathrm{t}_{\le 2}(\bP)) = \short(\bP) + \tau_\bP .\]
Using \cref{lem:switching}, we finally obtain the desired equality in~$L_1^k(\cN)$:
  \[\short(\bP) = \mathrm{t}_{\ge 1}(\bP)[-1] - \mathrm{t}_{\le 2}(\bP) - \tau_\bP. \]

If $\bP$ is a diagonal complex, both truncations are diagonal again and part~\eqref{it:truncationa} follows. If $P_0 =0$, then $\mathrm{t}_{\ge 1}(\bP) = \bP$, $J=P_1$ and $\mathrm{t}_{\le 2}(\bP) = \tors{\id_{P_1}}{\id_{P_1}}[1]=0$ in $L_1^2(\cN)$; this shows part~\eqref{it:truncationb}.
\end{proof}

\begin{prop}\label{prop:shorten}
 The homomorphism
 \[ i_k \colon L_1^k(\cN) \to L_1^{k+1}(\cN) \]
 is an isomorphism for all $k \geq 2$.
 Its inverse is induced by the assignment
 \[ \bP \mapsto - \short(\bP) - \tau_\bP. \]
\end{prop}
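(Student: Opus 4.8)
The plan is to produce the claimed inverse as an honest homomorphism $r\colon L_1^{k+1}(\cN)\to L_1^k(\cN)$, $r(\bP)=-\short(\bP)-\tau_\bP$, and then to check that the two composites $r\circ i_k$ and $i_k\circ r$ are the respective identities. To see that $r$ is well defined I would first note that the factorisations $d_2\colon P_2\twoheadrightarrow J\rightarrowtail P_1$ and $d_2'\colon P_2\twoheadrightarrow K\rightarrowtail P_1$ are unique up to (unique) isomorphism, so that $\short(\bP)$ has an unambiguous class in $B_1^k(\cN)$ and $\tau_\bP$ depends only on $[J]=[K]\in K_0(\cN)$. Choosing factorisations compatibly along a short exact sequence of binary complexes shows that $\short(-)$ and $\tau_{(-)}$ are additive, so $r$ descends to a homomorphism $B_1^{k+1}(\cN)\to L_1^k(\cN)$; by \cref{lem:truncation}\eqref{it:truncationa} it kills diagonal complexes and hence factors through $K_1^{k+1}(\cN)$.

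The remaining point for well-definedness and both composite identities are governed by \cref{lem:truncation}. Whenever I invoke it for a complex not already satisfying $J\cong K$, I would first reduce to that case: since $[J]=[K]$ in $K_0(\cN)$, pick $M$ with $J\oplus M\cong K\oplus M$ and add to $\bP$ the diagonal complex $(M\xrightarrow{\id}M)$ concentrated in degrees $1$ and $2$. This changes neither the class of $\bP$ in $L_1^{k+1}(\cN)$ nor $r(\bP)$ (additivity together with \cref{lem:truncation}\eqref{it:truncationa}), while arranging $J\cong K$, so that \cref{lem:truncation} applies and gives a formula of the shape $r(\bP)=-\mathrm{t}_{\ge1}(\bP)[-1]+\mathrm{t}_{\le2}(\bP)$ in $L_1^k(\cN)$. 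Compatibility of $r$ with the ladder relation of \cref{def:L1k}\eqref{it:L1kd} I expect to reduce—after the $\tau_\bP$-terms cancel because $J_\bP\cong J_\bQ$—to the ladder relations already available in $L_1^k(\cN)$, applied to the ladders that $\sigma$ and $\tau$ induce on $\mathrm{t}_{\ge1}$ and $\mathrm{t}_{\le2}$.

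For $r\circ i_k=\id$ there is a clean shortcut that avoids the general machinery. Given $\bQ$ supported on $[0,k]$, the shifted complex $\bQ[1]$ is supported on $[1,k+1]$ with $(\bQ[1])_0=0$, and its two degree-$2$ differentials are the surjections $d_1,d_1'\colon Q_1\to Q_0$, so that $J=K=Q_0$ automatically. Thus \cref{lem:truncation}\eqref{it:truncationb} applies verbatim and yields $r(\bQ[1])=-\bQ[1][-1]=-\bQ$ in $L_1^k(\cN)$. On the other hand $\bQ[1]=-i_k(\bQ)$ in $L_1^{k+1}(\cN)$ by the shift relation \eqref{equ:shifting}, so applying the homomorphism $r$ gives $r(i_k(\bQ))=\bQ$. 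In particular $i_k$ is injective.

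The hard direction is $i_k\circ r=\id$, equivalently surjectivity of $i_k$. Applying $i_k$ to the formula $r(\bP)=-\mathrm{t}_{\ge1}(\bP)[-1]+\mathrm{t}_{\le2}(\bP)$ and using \eqref{equ:shifting} in the form $i_k(\mathrm{t}_{\ge1}(\bP)[-1])=-\mathrm{t}_{\ge1}(\bP)$ reduces the claim $i_k(r(\bP))=\bP$ to the single identity $\bP=\mathrm{t}_{\ge1}(\bP)+\mathrm{t}_{\le2}(\bP)$ in $L_1^{k+1}(\cN)$. I expect this truncation identity to be the main obstacle: because the top and bottom differentials of a binary complex have different images in $P_1$, there is no short exact sequence of binary complexes realising $\bP$ as an extension of $\mathrm{t}_{\le2}(\bP)$ by $\mathrm{t}_{\ge1}(\bP)$, so the identity cannot simply be read off a filtration. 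Instead I would establish it by exhibiting an explicit binary ladder—in the spirit of the proof of \cref{lem:truncation} and the total-complex argument of \cref{lem:factorisation}—between $\bP$ together with auxiliary diagonal summands (copies of $(J\xrightarrow{\id}J)$ and its shift) and $\mathrm{t}_{\ge1}(\bP)\oplus\mathrm{t}_{\le2}(\bP)$, which have the same underlying graded object in each degree; the ladder relation then produces the desired equality once the diagonal summands and the resulting length-$1$ torsion terms are accounted for. With both composites equal to the identity, $i_k$ is an isomorphism with inverse $r$.
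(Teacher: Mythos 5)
Your left-inverse computation ($r\circ i_k=\id$ via $\bQ[1]$, \cref{lem:truncation}\eqref{it:truncationb} and the shift relation \eqref{equ:shifting}) is exactly the paper's argument and is fine. But the rest of the proposal has two genuine gaps. First, your reduction to the case $J\cong K$ rests on the claim that $[J]=[K]$ in $K_0(\cN)$ yields an $M$ with $J\oplus M\cong K\oplus M$. This is false in a general exact category: $[J]=[K]$ only gives an isomorphism after adding middle terms of suitable short exact sequences, not a stable isomorphism, unless $\cN$ is split exact. Here $J$ and $K$ are the kernels of the two admissible epimorphisms $d_1,d_1'\colon P_1\twoheadrightarrow P_0$, which need not be stably isomorphic. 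The paper never needs \cref{lem:truncation} outside the two cases where $J\cong K$ holds automatically (diagonal complexes, and $P_0=0$), and it verifies compatibility with the ladder relation by writing down the induced ladder $(\short(\bP),\short(\bQ),\short(\sigma),\short(\tau))$ directly on the shortenings --- using only $J_\bP\cong J_\bQ$ and $K_\bP\cong K_\bQ$, which come from $\sigma_1,\tau_1$ --- so no such reduction is required.

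Second, and more importantly, the right-inverse direction is where your proposal stops being a proof: the identity $\bP=\mathrm{t}_{\ge1}(\bP)+\mathrm{t}_{\le2}(\bP)$ in $L_1^{k+1}(\cN)$ is only announced as something you ``would establish by exhibiting an explicit binary ladder,'' it presupposes the flawed $J\cong K$ reduction even to be stated, and it is precisely the hard content you would need to supply. The paper avoids this entirely with a short formal trick, which is the key idea of the whole argument: since $\short(\bP)$ does not depend on whether $\bP$ is regarded as supported on $[0,k+1]$ or $[0,k+2]$, one has $i_k\circ p_k=p_{k+1}\circ i_{k+1}$ as maps $L_1^{k+1}(\cN)\to L_1^{k+1}(\cN)$, and the right-hand side is the identity by the already-proved left-inverse statement one level up. You should replace your sketch of the surjectivity step by this observation (or else actually carry out the ladder construction you allude to, which is substantially more work and, as written, not obviously compatible with the involution restriction in \cref{def:L1k}\eqref{it:L1kd}).
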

\begin{proof}
 We begin by showing that the assignment $\bP \mapsto - \short(\bP)- \tau_\bP$ induces a well-defined homomorphism $p_k: L_1^{k+1}(\cN) \to L^k_1(\cN)$.

 If $\bP' \rightarrowtail \bP \twoheadrightarrow \bP''$ is a short exact sequence of binary acyclic complexes supported on $[0,k+1]$, then  we have induced short exact sequences $J' \rightarrowtail J \twoheadrightarrow J''$ and $K' \rightarrowtail K \twoheadrightarrow K''$ by \cite[Corollary~3.6]{Bu10}. In particular, we obtain a short exact sequence $\short(\bP') \rightarrowtail \short(\bP) \twoheadrightarrow \short(\bP'')$. Since we also have $\tau_\bP = \tau_{\bP'} + \tau_{\bP''}$, we obtain an induced homomorphism
 \[ p_k'' \colon B_1^{k+1}(\cN) \to L^k_1(\cN). \]
 Let $\bP$ be a diagonal binary acyclic complex supported on $[0,k+1]$, then we have $p''_k(\bP) = 0$ in $L_1^k(\cN)$ by \cref{lem:truncation}\eqref{it:truncationa}. In particular, $p_k''$ induces a homomorphism
 \[ p_k' \colon K_1^{k+1}(\cN) \to L_1^k(\cN). \]
 If $(\bP,\bQ,\sigma,\tau)$ is a binary ladder in $\cN$ with $\bP$ and $\bQ$ supported on $[0,k+1]$, then $\sigma_1$ and $\tau_1$ induce isomorphisms $\sigma^J \colon J_\bP \xrightarrow{\sim} J_\bQ$ and $\tau^K \colon K_\bP \xrightarrow{\sim} K_\bQ$, respectively. If $\sigma_1$ and $\tau_2$ are involutions, so are $\sigma^J$ and $\tau^K$.
 These define a binary ladder $(\short(\bP), \short(\bQ), \short(\sigma), \short(\tau))$ with
 \begin{align*}
  &\short(\sigma)_0 = \sigma^J \oplus \tau^K \oplus \tau_0, \quad
  \short(\sigma)_1 = \sigma_2 \oplus \tau^K \oplus \sigma^J \oplus \tau_1, \quad
  \short(\sigma)_2 = \sigma_3 \oplus \sigma^J \oplus \tau^K \\
  &\short(\tau)_0 = \sigma^J \oplus \tau^K \oplus \sigma_0, \quad
  \short(\tau)_1 = \tau_2 \oplus \tau^K \oplus \sigma^J \oplus \sigma_1, \quad
  \short(\tau)_2 = \tau_3 \oplus \sigma^J \oplus \tau^K
  \end{align*}
  and $\short(\sigma)_i = \sigma_{i+1}$ and $\short(\tau)_i = \tau_{i+1}$ for $i \geq 3$.
  Hence we have
  \begin{eqnarray*}
  \lefteqn{\sum_{i=0}^{k} (-1)^i \tors{\short(\sigma)_i}{\short(\tau)_i}}\\
   &= &\tors{\tau_0}{\sigma_0} - \tors{\sigma_2}{\tau_2} - \tors{\tau_1}{\sigma_1} + \tors{\sigma_3}{\tau_3} + \sum_{i=3}^\infty (-1)^i\tors{\sigma_{i+1}}{\tau_{i+1}} \\
   &= & - \sum_{i=0}^{k+1} (-1)^{i} \tors{\sigma_i}{\tau_i}
   \end{eqnarray*}
in $L_1^1(\cN)$ by \cref{lem:switching}.
  If the given ladder is as in \cref{def:L1k}\eqref{it:L1kd}, it follows  that
  \begin{eqnarray*}
  \lefteqn{p_k'(\bQ - \bP) =  -\short(\bQ) - \tau_\bQ + \short(\bP) + \tau_\bP}\\
  &=& - \sum_{i\ge 0} (-1)^i \tors{\short(\sigma)_i}{\short(\tau)_i}
  =  p'_k\left(\sum_{i \ge 0} (-1)^i\tors{\sigma_i}{\tau_i}\right)
  \end{eqnarray*}
  in $L_1^k(\cN)$   since $J_\bP \cong J_\bQ$, hence $\tau_\bP = \tau_\bQ$, and since, by \cref{rem:length1},  $p'_k$ maps  $\tors{\sigma_i}{\tau_i}\in K_1^{k+1}(\cN)$ to $\tors{\sigma_i}{\tau_i}\in L_1^{k}(\cN)$ for each $i$. Consequently, $p_k'$ induces the desired homomorphism
  \[ p_k \colon L_1^{k+1}(\cN) \to L_1^k(\cN). \]

  We now show that $p_{k} \circ \lad_k = \id$ for all $k \geq 2$. Let $\bP$ be a binary acyclic complex in $\cN$ supported on $[0,k]$.
  By \cref{equ:shifting} and \cref{lem:truncation}\eqref{it:truncationb}, we have
  \[p_k (i_k(\bP)) = - p_k( \bP[1]) = \short(\bP[1]) + \tau_{\bP[1]} = (\bP - \tau_{\bP[1]}) + \tau_{\bP[1]} = \bP. \]
  in $L_1^k(\cN)$, as was to be shown.

  We are left with showing that $p_k$ is also a right-inverse to $i_k$.
  Let $\bP$ be a binary acyclic complex in $\cN$ supported on $[0,k+1]$.
  Since the definition of $\short(\bP)$ is independent of whether we regard $\bP$ as a complex supported on $[0,k+1]$ or $[0,k+2]$, we have
  \[\lad_k ( p_{k}(\bP)) = p_{k+1} ( \lad_{k+1} (\bP))= \bP\]
in $L_1^{k+1}(\cN)$, as desired.
\end{proof}
\begin{proof}[Proof of \cref{thm:short-complexes}]
 From \cref{lem:factorisation} we obtain the directed system
 \[ K_1^2(\cN) \twoheadrightarrow L_1^2(\cN) \to K_1^3(\cN) \twoheadrightarrow L_1^3(\cN) \rightarrow \ldots \]
 Since the colimit of the cofinal sub-system  $K_1^2(\cN) \rightarrow K_1^3(\cN) \rightarrow \ldots$ is $K_1(\cN)$, the colimit of the displayed system is $K_1(\cN)$ as well. Hence, the colimit of the cofinal sub-system $L_1^2(\cN) \rightarrow L_1^3(\cN) \to \ldots$ is also $K_1(\cN)$. Furthermore, all the connecting maps in this sub-system are isomorphisms by \cref{prop:shorten}. The claim follows.
\end{proof}

\begin{remark}
Grayson shows in the handwritten notes mentioned in \cref{rem:GraysonShortening} that $\short(\bP) \in K_1(\cN)$ differs from $\bP$ by classes of binary acyclic complexes of length at most~$2$. By induction, this proves that the canonical map $K_1^2(\cN) \rightarrow K_1(\cN)$ is surjective. While Grayson uses slightly involved double complex arguments, we use simpler and at the same time more potent arguments and also prove the simple relation $\bP + \short(\bP) = - \tau_\bP$ in $K_1(\cN)$.
\end{remark}

\begin{cor}[{\cite[Theorem~1.4]{KW17}}]
	The canonical map $K_1^3(\cN)\to K_1(\cN)$ is onto and admits a canonical section.
\end{cor}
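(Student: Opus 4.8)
The plan is to exploit the fact that the canonical map $K_1^3(\cN)\to K_1(\cN)$ factors through $L_1^3(\cN)$, and then to assemble a section out of the isomorphisms provided by \cref{thm:short-complexes}.

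First I would record the factorisation. Since the natural map $L_1^k(\cN)\to K_1(\cN)$ of \cref{thm:short-complexes} is well-defined, the ladder relations hold in $K_1(\cN)$, and hence the canonical map factors as
\[ K_1^3(\cN)\twoheadrightarrow L_1^3(\cN)\xrightarrow{\sim} K_1(\cN), \]
where the first arrow is the defining quotient map and the second is the isomorphism of \cref{thm:short-complexes} (valid since $3\ge 2$). Surjectivity of $K_1^3(\cN)\to K_1(\cN)$ is then immediate, being the composite of a surjection and an isomorphism.

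To produce the section, I would invoke \cref{lem:factorisation}, which furnishes a canonical map $L_1^2(\cN)\to K_1^3(\cN)$ (the first half of the displayed factorisation of $i_2$) whose composite with the quotient map $K_1^3(\cN)\twoheadrightarrow L_1^3(\cN)$ is exactly $i_2$. Precomposing this map with the inverse of the isomorphism $L_1^2(\cN)\xrightarrow{\sim} K_1(\cN)$ from \cref{thm:short-complexes} gives a canonical candidate
\[ s\colon K_1(\cN)\xrightarrow{\sim} L_1^2(\cN)\to K_1^3(\cN). \]

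Finally I would verify that $s$ is a section. Composing $s$ with the canonical map $K_1^3(\cN)\to K_1(\cN)$ and using the factorisation above turns the composite into
\[ K_1(\cN)\xrightarrow{\sim} L_1^2(\cN)\xrightarrow{i_2} L_1^3(\cN)\xrightarrow{\sim} K_1(\cN). \]
The only point with any content is that the two isomorphisms to $K_1(\cN)$ are compatible with $i_2$, i.e.\ that $\ell_3\circ i_2=\ell_2$ for the maps $\ell_k\colon L_1^k(\cN)\to K_1(\cN)$ of \cref{thm:short-complexes}; this holds because all of these maps are induced by regarding a complex of bounded length as a complex of arbitrary length and therefore agree on the generating classes. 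Granting this compatibility, the displayed composite collapses to $\ell_2\circ\ell_2^{-1}=\id$, so $s$ is a section. As $s$ is built solely from the canonical maps of \cref{lem:factorisation} and \cref{thm:short-complexes}, it is canonical, which completes the plan.
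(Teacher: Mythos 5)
Your proposal is correct and follows essentially the same route as the paper: the section is the inverse of the isomorphism $L_1^2(\cN)\to K_1(\cN)$ from \cref{thm:short-complexes} composed with the map $L_1^2(\cN)\to K_1^3(\cN)$ from \cref{lem:factorisation}. You merely spell out the verification that this is indeed a section (via the compatibility $\ell_3\circ i_2=\ell_2$), which the paper leaves implicit.
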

\begin{proof}
	The right inverse is given by the inverse of the bijection $L_1^2(\cN)\to K_1(\cN)$ from \cref{thm:short-complexes} composed with the map $L_1^2(\cN)\to K_1^3(\cN)$ from \cref{lem:factorisation}.
\end{proof}
\begin{rem}
	The inverse of the isomorphism $L_1^2(\cN)\to K_1(\cN)$ from \cref{thm:short-complexes} admits an explicit description. This agrees with the map $\Psi$ appearing in the proof of \cite[Theorem~1.1]{KW17}.
\end{rem}

\section{The relation to Nenashev's description}\label{sec:nena}

In this section, we compare Nenashev's and Grayson's descriptions of $K_1$.

\begin{definition}\label{def:NenashevK1}
{\em Nenashev's $K_1$-group $K_1^\mathrm{N}(\cN)$ of $\cN$} is defined as the abelian group generated by binary acylic complexes $\bP$ of length $2$ subject to the following relations:
\begin{enumerate}[(1)]
 \item\label{it:NenashevK1a} If $\bP$ is a diagonal complex, then $\bP=0$.
 \item\label{it:NenashevK1b} If
 \[\begin{tikzcd}
   P'_2\ar[d, shift right]\ar[d, shift left]\ar[r, shift left]\ar[r, shift right] & P'_1\ar[d, shift right]\ar[d, shift left]\ar[r, shift left]\ar[r, shift right]& P'_0\ar[d, shift right]\ar[d, shift left] \\
  P_2\ar[d, shift right]\ar[d, shift left]\ar[r, shift left]\ar[r, shift right] & P_1\ar[d, shift right]\ar[d, shift left]\ar[r, shift left]\ar[r, shift right]& P_0\ar[d, shift right]\ar[d, shift left]\\
  P''_2\ar[r, shift left]\ar[r, shift right] & P''_1\ar[r, shift left]\ar[r, shift right]& P''_0
 \end{tikzcd}\]
is a diagram in $\cN$ such that all rows and columns are binary acyclic complexes, top differentials commute with top differentials and bottom differentials commute with bottom differentials, then
\[\bP_0 - \bP_1 +\bP_2 = \bP' - \bP + \bP''.\]
 \end{enumerate}
\end{definition}

Nenashev proves in \cite{Nenashev1998} that $K_1^\mathrm{N}(\cN)$ is canonically isomorphic  to Quillen's $K_1$-group of $\cN$. The following corollary purely algebraically proves that $K_1^\mathrm{N}(\cN)$ is isomorphic to $K_1(\cN)$, i.e., to Grayson's $K_1$-group of $\cN$. By \cite[Remark~8.1]{Grayson2012}, regarding a binary acyclic complex of length $2$ as a class in $K_1(\cN)$ defines a map $K_1^\mathrm{N}(\cN) \rightarrow K_1(\cN)$.

\begin{cor}[{\cite[Theorem~1.1]{KW17}}]\label{cor:nenashev}
 The canonical map
 \[ K_1^\mathrm{N}(\cN) \to K_1(\cN) \]
 is an isomorphism.
\end{cor}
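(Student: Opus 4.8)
The plan is to build mutually inverse homomorphisms between $K_1^{\mathrm{N}}(\cN)$ and $K_1(\cN)$, using the machinery just developed. The key observation is that $L_1^2(\cN)$ sits naturally between the two groups: by \cref{thm:short-complexes} we already know $L_1^2(\cN) \xrightarrow{\sim} K_1(\cN)$, so it suffices to identify $K_1^{\mathrm{N}}(\cN)$ with $L_1^2(\cN)$. Both groups are generated by length-$2$ binary acyclic complexes, and both kill diagonal complexes, so the real content is comparing the remaining relations: Nenashev's relation \eqref{it:NenashevK1b} on the one side, and the ladder relations together with the additivity (short exact sequence) relations defining $K_1^2(\cN)$ on the other.

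\textbf{First}, I would construct a map $K_1^{\mathrm{N}}(\cN) \to L_1^2(\cN)$. Since $K_1^{\mathrm{N}}(\cN)$ is freely generated by length-$2$ complexes modulo relations \eqref{it:NenashevK1a} and \eqref{it:NenashevK1b}, I send a generator $\bP$ to its class in $L_1^2(\cN)$ and check that both relations are respected. Relation \eqref{it:NenashevK1a} holds because diagonal complexes already vanish in $K_1(\cN)$, hence in $L_1^2(\cN)$. For relation \eqref{it:NenashevK1b}, the displayed Nenashev diagram is exactly a short exact sequence of binary acyclic complexes whose total complex is analysed; the equality $\bP_0 - \bP_1 + \bP_2 = \bP' - \bP + \bP''$ should follow from the additivity relation in $K_1^2(\cN)$ (the rows and columns being short exact sequences) combined with the shift relation \eqref{equ:shifting}, precisely the computation already carried out in the proof of \cref{lem:factorisation}. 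In fact the Nenashev relation is essentially the statement that filtering the total complex horizontally and vertically gives the same answer, which is what that lemma's proof establishes.

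\textbf{Second}, for the reverse map $K_1(\cN) \to K_1^{\mathrm{N}}(\cN)$, I would use the identification $K_1(\cN) \cong L_1^2(\cN)$ and show that $L_1^2(\cN)$ maps to $K_1^{\mathrm{N}}(\cN)$. Again generators are length-$2$ complexes, so I send each to itself; the point is that the ladder relations defining $L_1^2(\cN)$ out of $K_1^2(\cN)$ hold in $K_1^{\mathrm{N}}(\cN)$. Here the ladder relation $\bQ - \bP = \sum_{i=0}^2 (-1)^i \tors{\sigma_i}{\tau_i}$ for a binary ladder with $\sigma_i, \tau_i$ involutions, together with the additivity relations, must be deduced from Nenashev's relation \eqref{it:NenashevK1b}. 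This is the direction where one needs that Nenashev's relations are strong enough to encode both additivity and the elementary automorphism/ladder identities; the involution hypothesis and the structure of the maps $\tors{\sigma_i}{\tau_i}$ should make the required instances of \eqref{it:NenashevK1b} explicit.

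\textbf{The main obstacle} I anticipate is verifying that these two maps are genuinely mutually inverse, i.e.\ that on generators the composites are the identity and that I have not quotiented by too much or too little on either side. Since both maps are the identity on length-$2$ generators, the composites are the identity provided the maps are well-defined, so the entire difficulty collapses into the two well-definedness checks above, namely showing that each group's defining relations are implied by the other's. The first direction is essentially the total-complex computation of \cref{lem:factorisation}; the second, deducing the ladder relations from \eqref{it:NenashevK1b}, is the genuinely new verification and is where I would spend the most care, likely by writing each $\tors{\sigma_i}{\tau_i}$ as a Nenashev generator and assembling the $3\times 3$ diagram whose associated relation yields the desired ladder identity.
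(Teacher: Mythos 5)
Your overall strategy --- interposing $L_1^2(\cN)$ between $K_1^{\mathrm{N}}(\cN)$ and $K_1(\cN)$ and using \cref{thm:short-complexes} --- is the right one, but there is a genuine gap in your first step, and you have the relative difficulty of the two directions exactly backwards. The direction you call ``the genuinely new verification'' (deducing the ladder relations from Nenashev's relation \eqref{it:NenashevK1b}) is in fact immediate: a binary ladder $(\bP,\bQ,\sigma,\tau)$ of length $2$ with $P_i=Q_i$ gives a Nenashev diagram whose rows are $0$, $\bP$, $\bQ$ and whose columns are the length-$1$ complexes $\tors{\sigma_i}{\tau_i}$, so the ladder relation is literally an instance of \eqref{it:NenashevK1b}; likewise the additivity and diagonal relations defining $K_1^2(\cN)$ are instances of \eqref{it:NenashevK1b} (with diagonal columns) and \eqref{it:NenashevK1a}. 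This cheaply gives a \emph{surjection} $L_1^2(\cN)\twoheadrightarrow K_1^{\mathrm{N}}(\cN)$.

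The gap is in the other direction. You claim that Nenashev's relation holds in $L_1^2(\cN)$ because it is ``precisely the computation already carried out in the proof of \cref{lem:factorisation}.'' It is not: that lemma treats binary ladders, i.e.\ $3\times 3$-type diagrams whose columns are \emph{isomorphisms}, and its total complex has length $k+1$. In the Nenashev diagram the columns are genuine length-$2$ binary acyclic complexes (pairs of short exact sequences with possibly different vertical differentials), the total complex has length $4$, and reducing it back to length $2$ requires the Grayson shortening machinery; this is exactly the involved computation the paper carries out in \cref{rem:NenashevRelation}, not a corollary of \cref{lem:factorisation} or of additivity. The paper's proof avoids this verification entirely by a formal argument: since the composite $L_1^2(\cN)\twoheadrightarrow K_1^{\mathrm{N}}(\cN)\to K_1(\cN)$ equals the isomorphism of \cref{thm:short-complexes} and the first map is surjective, both maps are forced to be isomorphisms. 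You should either adopt that shortcut or supply the length-$4$ total-complex argument; as written, your first well-definedness check does not go through.
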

\begin{proof}
 Since the relations used to define $L_1^2(\cN)$ are special cases of Nenashev's relation, the canonical surjection $K_1^2(\cN) \twoheadrightarrow K_1^\mathrm{N}(\cN)$ factors via $L_1^2(\cN)$, yielding a surjection $L_1^2(\cN) \twoheadrightarrow K_1^\mathrm{N}(\cN)$.
 Since we have a commutative diagram
 \[\begin{tikzcd}
  L_1^2(\cN)\ar[r]\ar[d] & K_1(\cN) \\
  K_1^\mathrm{N}(\cN)\ar[ur] &
 \end{tikzcd}\]
 it follows from \cref{thm:short-complexes} that
  $L_1^2(\cN) \to K_1^\mathrm{N}(\cN)$ and $K_1^\mathrm{N}(\cN) \rightarrow K_1(\cN)$ are isomorphisms.
\end{proof}

\begin{rem}\label{rem:NenashevRelation}
The bijectivity of the map $L_1^2(\cN) \rightarrow K_1^\mathrm{N}(\cN)$ in the proof of the previous corollary means that Nenashev's relation (\cref{def:NenashevK1}\eqref{it:NenashevK1b}) can be expressed in $K_1^2(\cN)$ as a linear combination of relations arising from binary ladders that we used to define $L_1^2(\cN)$ (\cref{def:L1k}\eqref{it:L1kd}). The object of this remark is to explicitly write down such a linear combination.

Let
\[\begin{tikzcd}
M_2\ar[d, shift right]\ar[d, shift left]\ar[r, shift left]\ar[r, shift right] & M_1\ar[d, shift right]\ar[d, shift left]\ar[r, shift left]\ar[r, shift right]& M_0\ar[d, shift right]\ar[d, shift left] \\
N_2\ar[d, shift right]\ar[d, shift left]\ar[r, shift left]\ar[r, shift right] & N_1\ar[d, shift right]\ar[d, shift left]\ar[r, shift left]\ar[r, shift right]& N_0\ar[d, shift right]\ar[d, shift left]\\
P_2\ar[r, shift left]\ar[r, shift right] & P_1\ar[r, shift left]\ar[r, shift right]& P_0
\end{tikzcd}\]
be a diagram in $\cN$ as in \cref{def:NenashevK1}\eqref{it:NenashevK1b}.
Let $\bT$ denote the associated binary total complex (of length $4$). Choose factorisations
\[ M_1 \oplus N_2 \twoheadrightarrow J_3 \rightarrowtail M_0 \oplus N_1 \oplus P_2 \twoheadrightarrow J_2 \rightarrowtail N_0 \oplus P_1 \]
of the second and third top differential of $\bT$, and define $K_3$ and $K_2$ analogously in terms of the bottom differentials of $\bT$. The next step, according to our earlier constructions, would be to apply the Grayson shortening twice to $\bT$, but this results in a complicated complex with superfluous terms. We rather apply twice just the idea behind the Grayson shortening in order to obtain the following complex $\bT'$ of length $2$ with obvious differentials (top differentials on the left hand side, bottom differentials on the right hand side):
\[\begin{tikzcd}[row sep= tiny]
M_2\ar[r] & M_1 \oplus N_2\ar[r]\ar[d, phantom, "\oplus"] & J_3\ar[d, phantom, "\oplus"] \\
\oplus & K_3\ar[r]\ar[d, phantom, "\oplus"] & K_3\\
J_3\ar[r]\ar[d, phantom, "\oplus"] & J_3\ar[d, phantom, "\oplus"] & \oplus \\
K_3\ar[r] & M_0 \oplus N_1 \oplus P_2\ar[r]\ar[d, phantom, "\oplus"] & K_2\ar[d, phantom, "\oplus"] \\
\oplus & J_2\ar[r]\ar[d, phantom, "\oplus"] & J_2 \\
K_2\ar[r]\ar[d, phantom, "\oplus"] & K_2\ar[d, phantom, "\oplus"] & \oplus \\
J_2\ar[r] & N_0 \oplus P_1\ar[r] & P_0 \\
\end{tikzcd}
\qquad
\begin{tikzcd}[row sep= tiny]
M_2\ar[r] & M_1 \oplus N_2\ar[rd]\ar[d, phantom, "\oplus"] & J_3\ar[d, phantom, "\oplus"] \\
\oplus & K_3\ar[d, phantom, "\oplus"] & K_3\\
J_3\ar[rd]\ar[d, phantom, "\oplus"] & J_3\ar[uur]\ar[d, phantom, "\oplus"] & \oplus \\
K_3\ar[uur] & M_0 \oplus N_1 \oplus P_2\ar[rd]\ar[d, phantom, "\oplus"] & K_2\ar[d, phantom, "\oplus"] \\
\oplus & J_2\ar[d, phantom, "\oplus"] & J_2 \\
K_2\ar[rd]\ar[d, phantom, "\oplus"] & K_2\ar[uur]\ar[d, phantom, "\oplus"] & \oplus \\
J_2\ar[uur] & N_0 \oplus P_1\ar[r] & P_0 \\
\end{tikzcd}
\]
The obvious admissible monomorphisms $P_2 \to J_2$ and $P_2 \to K_2$ (the cokernel of both is $N_0$) define an admissible monomorphism from the binary acyclic complex~$T_b(\bP)$
\[\begin{tikzcd}[row sep= tiny,column sep=huge]
 & P_2\ar[r]\ar[d, phantom, "\oplus"] & P_2\ar[d, phantom, "\oplus"] \\
& P_2\ar[r]\ar[d, phantom, "\oplus"] & P_2 \\
P_2\ar[r]\ar[d, phantom, "\oplus"] & P_2\ar[d, phantom, "\oplus"] & \oplus \\
P_2\ar[r] & P_1\ar[r] & P_0 \\
\end{tikzcd}
\qquad
\begin{tikzcd}[row sep= tiny,column sep=huge]
& P_2\ar[dr]\ar[d, phantom, "\oplus"] & P_2\ar[d, phantom, "\oplus"] \\
& P_2\ar[d, phantom, "\oplus"] & P_2 \\
P_2\ar[dr]\ar[d, phantom, "\oplus"] & P_2\ar[uur]\ar[d, phantom, "\oplus"] & \oplus \\
P_2\ar[uur] & P_1\ar[r] & P_0 \\
\end{tikzcd}
\]
to (the bottom half of) $\bT'$. 
Similarly, we have an admissible epimorphism from (the top half of) $\bT'$ to the binary acyclic complex $T_f(\bM)$
\[\begin{tikzcd}[row sep= tiny,column sep=huge]
M_2\ar[r] & M_1\ar[r]\ar[d, phantom, "\oplus"] &  M_0\ar[d, phantom, "\oplus"] \\
\oplus &  M_0\ar[r]\ar[d, phantom, "\oplus"] &  M_0\\
 M_0\ar[r]\ar[d, phantom, "\oplus"] &  M_0\ar[d, phantom, "\oplus"] & \\
 M_0\ar[r] & M_0 &
\end{tikzcd}
\qquad
\begin{tikzcd}[row sep= tiny,column sep=huge]
M_2\ar[r] & M_1\ar[rd]\ar[d, phantom, "\oplus"] &  M_0\ar[d, phantom, "\oplus"] \\
\oplus &  M_0\ar[d, phantom, "\oplus"] &  M_0\\
 M_0\ar[rd]\ar[d, phantom, "\oplus"] &  M_0\ar[uur]\ar[d, phantom, "\oplus"] &  \\
 M_0\ar[uur] & M_0 &
\end{tikzcd}
\]
which obviously factors modulo $T_b(\bP)$. The kernel of the resulting epimorphism $\bT'/T_b(\bP) \rightarrow T_f(\bM)$ is
$T_{b,f}(\mathrm{sw}(\bN))$ which is obtained from $\bN$ by first switching top and bottom differential and then, similarly to $T_b(\bP)$ and $T_f(\bM)$, by adding copies of $N_2$ above $\mathrm{sw}(\bN)$ and copies of $N_0$ below $\mathrm{sw}(\bN)$. Hence we have
\[ \bT' = T_b(\bP) + T_{b,f}(\mathrm{sw}(\bN)) + T_f(\bM) \quad \textrm{in} \quad B_1(\cN).\]
Applying the switching automorphism $\tau_{P_2}$ at the appropriate place in all three degrees of $T_b(\bP)$ produces the direct sum of $\bP$ and a diagonal complex, so we obtain the relation
\begin{equation}\label{relation} T_b(\bP) = \bP + \tau_{P_2} \quad \textrm{in} \quad  L_1^2(\cN).
\end{equation}
Similarly, we obtain $T_f(\bM)=\bM+\tau_{M_0}$ and, also using \cref{lem:switching}, $T_{b,f}(\mathrm{sw}(\bN))=-\bN+\tau_{N_0}+\tau_{N_2}$.
Hence we have
\[\bT'=\bP-\bN+\bM+\tau_{P_2}+\tau_{N_0}+\tau_{N_2}+\tau_{M_0} \quad \textrm{in} \quad L_1^2(\cN).\]
Let $\bbC_i$ denote the binary acyclic complex $M_i\rightrightarrows N_i\rightrightarrows P_i$.
Filtering the total complex by columns, we similarly obtain
\[\bT'=\bbC_0-\bbC_1+\bbC_2+\tau_{M_0}+\tau_{P_1}+\tau_{M_1}+\tau_{P_2}\quad \textrm{in} \quad L_1^2(\cN).\]
The exact sequences $\bN$ and $\bbC_1$ furthermore imply the relations
\[\tau_{N_0}+\tau_{N_2}=\tau_{N_1}=\tau_{P_1}+\tau_{M_1} \quad \textrm{in} \quad B_1(\cN).\]
Hence we finally obtain the Nenashev relation
 \begin{equation}\label{NenashevRelation}\bP-\bN+\bM=\bbC_0-\bbC_1+\bbC_2 \quad \textrm{in} \quad L_1^2(\cN).
 \end{equation}
Put slightly differently, in $B_1(\cN)$, the difference of the two sides in (\ref{NenashevRelation}) is equal to the sum of (the negative of) the relation given by (\ref{relation}) and the analogous relations for $T_f(\bM)$, $T_{b,f}(\mathrm{sw}(\bN))$, $T_b(\bbC_0)$, $T_f(\bbC_2)$ and $T_{b,f}(\mathrm{sw}(\bbC_1))$. Each of these relations in turn is made up of a ladder and a diagonal relation apart from those for $T_{b,f}(\mathrm{sw}(\bN))$ and $T_{b,f}(\mathrm{sw}(\bbC_1))$ which in addition involve the ladder and diagonal relation occurring in the proof of \cref{lem:switching}.
\end{rem}

\bibliographystyle{amsalpha}
\bibliography{shortening}

\providecommand{\bysame}{\leavevmode\hbox to3em{\hrulefill}\thinspace}
\providecommand{\MR}{\relax\ifhmode\unskip\space\fi MR }
% \MRhref is called by the amsart/book/proc definition of \MR.
\providecommand{\MRhref}[2]{%
  \href{http://www.ams.org/mathscinet-getitem?mr=#1}{#2}
}
\providecommand{\href}[2]{#2}
\begin{thebibliography}{HKT17}

\bibitem[B{\"u}h10]{Bu10}
Theo B{\"u}hler, \emph{Exact categories}, Expositiones Math. \textbf{28}
  (2010), 1--69.

\bibitem[Gra12]{Grayson2012}
Daniel~R. Grayson, \emph{Algebraic {$K$}-theory via binary complexes}, J. Amer.
  Math. Soc. \textbf{25} (2012), no.~4, 1149--1167. \MR{2947948}

\bibitem[Har15]{HarPhD}
Thomas~K. Harris, \emph{Binary complexes and algebraic {$K$}-theory},
  http://eprints.soton.ac.uk/383999/, 2015, PhD Thesis, Southampton.

\bibitem[HKT17]{HKT2017}
Tom Harris, Bernhard K\"ock, and Lenny Taelman, \emph{Exterior power operations
  on higher {$K$}-groups via binary complexes}, Ann. K-Theory \textbf{2}
  (2017), no.~3, 409--449. \MR{3658990}

\bibitem[KW]{KW17}
Daniel Kasprowski and Christoph Winges, \emph{Shortening binary complexes and
  the commutativity of {K}-theory with infinite products}, to appear in
  \textit{Trans.~Amer.~Math.~Soc.},
  \href{https://arxiv.org/abs/1705.09116}{arXiv:1705.09116}.

\bibitem[Nen98]{Nenashev1998}
A.~Nenashev, \emph{{$K_1$} by generators and relations}, J. Pure Appl. Algebra
  \textbf{131} (1998), no.~2, 195--212. \MR{1637539}

\end{thebibliography}

\end{document}